 \newtheorem{thm}{Theorem}
 \newtheorem{lem}[thm]{Lemma}
 \newtheorem{prop}[thm]{Proposition}
 \newtheorem{obs}[thm]{Observation}
 \theoremstyle{definition}
 \newtheorem{defn}[thm]{Definition}
 \newtheorem{exmp}[thm]{Example}
\numberwithin{equation}{section}
\newcommand{\Real}{\mathbb{R}}
\newcommand{\set}[1]{\left\{#1\right\}}
\newcommand{\Set}[2]{\set{#1\ \vert\ #2}}
\renewcommand{\atop}[2]{\genfrac{}{}{0pt}{}{#1}{#2}}
\begin{document}

\title{A polynomial representation and a unique code of a simple undirected graph}
\author{ Shamik Ghosh\thanks{Department of Mathematics, Jadavpur University, Kolkata-700032, India. sghosh@math.jdvu.ac.in (Communicating author)}\,,
Raibatak Sen Gupta\thanks{Department of Mathematics, Jadavpur University,
 Kolkata-700032, India. raibatak\_sengupta@yahoo.co.in}\,,
M. K. Sen\thanks{Department of Pure Mathematics, University of Calcutta, Kolkata-700019, India. senmk6@yahoo.com}}

\date{}
\maketitle

\begin{abstract}
\noindent
In this note we introduce a representation of simple undirected graphs in terms of polynomials and obtain a unique code for a simple undirected graph.

\vspace{0.5em}\noindent
{\small {\bf AMS Subject Classifications:} 05C62, 05C60.

\noindent
{\bf Keywords:} Graph, simple graph, undirected graph, graph representation, graph isomorphism.}
\end{abstract}

\noindent
Let $M$ be the set of all positive integers greater than $1$. Let $n\in M$ and $V(n)$ be the set of all divisors of $n$, greater than $1$. Define a simple undirected graph $G(n)=(V,E)$ with the vertex set $V=V(n)$ and any two distinct vertices $a,b\in V$ are adjacent if and only if $\gcd (a,b)>1$. From an observation in \cite{CGMS} it follows that any simple undirected graph is isomorphic to an induced subgraph of $G(n)$ for some $n\in M$. For the sake of completeness and further use of the construction we provide a sketch of the proof below. Throughout the note by a {\em graph} we mean a simple undirected graph.

\begin{thm}\label{t1}
Let $G$ be a graph. Then $G$ is isomorphic to an induced subgraph of $G(n)$ for some $n\in M$.
\end{thm}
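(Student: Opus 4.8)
The plan is to translate the adjacency structure of $G$ into divisibility, labelling each vertex by an integer whose prime factors record exactly the edges meeting that vertex. I assume $G$ is finite, as it must be for an embedding into the finite graph $G(n)$ to exist. First I would choose pairwise distinct primes: one prime $p_e$ for each edge $e$ of $G$, and one further prime $q_v$ for each vertex $v$, all of these primes taken to be distinct from one another; finitely many primes are needed, since $G$ has finitely many vertices and edges. I then define a labelling $f\colon V(G)\to M$ by
\[
 f(v)=q_v\prod_{e\ni v}p_e ,
\]
the product of the ``private'' prime $q_v$ of $v$ with the primes of all edges incident with $v$, and put $n=\prod_{e}p_e\cdot\prod_{v}q_v$.

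Next I would verify three properties. First, every label lies in $V(n)$: each $f(v)$ divides $n$ and exceeds $1$ because of the factor $q_v$, so $f(v)\in V(n)$. Second, $f$ is injective: the prime $q_w$ divides $f(v)$ only when $v=w$, so distinct vertices receive distinct labels. Third, and most importantly, $f$ preserves and reflects adjacency: for distinct vertices $v,w$, a prime divides both $f(v)$ and $f(w)$ precisely when it is $p_e$ for some edge $e$ incident with both $v$ and $w$, and in a simple graph the only possible such edge is the edge joining $v$ and $w$. Hence $\gcd(f(v),f(w))>1$ if and only if $v$ and $w$ are adjacent in $G$, which is exactly the adjacency rule defining $G(n)$.

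Combining these, $f$ is an injection of $V(G)$ into $V(n)$ that carries edges to edges and non-edges to non-edges; therefore $f$ is an isomorphism of $G$ onto the subgraph of $G(n)$ induced on $f(V(G))$, which proves the claim.

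The step I expect to be the main obstacle is arranging the labelling to satisfy all three requirements simultaneously. Using only the edge-primes would be the natural first attempt, but it fails for isolated vertices, whose empty product equals $1\notin V(n)$, and it would also assign the common label $1$ to two isolated vertices, destroying injectivity. The private primes $q_v$ repair both defects at once while, crucially, never creating a common factor between two distinct labels, since each $q_v$ occurs in exactly one label. Thus the delicate point is not any single computation but the design of the code itself, and the vertex-prime device is what makes the construction go through.
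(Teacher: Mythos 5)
Your proof is correct, but it takes a genuinely different route from the paper. The paper assigns one prime to each \emph{maximal clique} of $G$ and labels each vertex by the product of the primes of the maximal cliques containing it; since distinct vertices can lie in exactly the same set of maximal cliques (as happens in the paper's own Example), the paper must then perturb the labels by raising primes to different powers to restore injectivity. You instead assign one prime per \emph{edge} plus a private prime per \emph{vertex}, which buys injectivity and the treatment of isolated vertices for free, at the cost of using many more primes (hence a much larger $n$): for a clique $K_m$ the paper's construction needs a single prime, while yours needs $\binom{m}{2}+m$. Your construction is in fact an instance of the general principle the paper isolates immediately after Theorem 1 --- any collection of cliques covering both the vertices and the edges (a ``total clique covering'') suffices --- where your covering consists of all edges together with all singletons, and the paper's consists of all maximal cliques. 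So your argument is a clean, self-contained special case of the same mechanism, slightly wasteful in primes but free of the paper's somewhat hand-waved ``modify $s_1(v)$ by powers of primes'' step.
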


\begin{proof}
Let $G=(V,E)$ be a graph. Let $\set{C_1,C_2,\ldots,C_k}$ be the set of all maximal cliques of $G$. For $i=1,2,\ldots,k$, let $p_i$ be the $i^\text{th}$ prime. For each $v\in V$, define $s_1(v)=\prod\Set{p_j}{v\in C_j}$. Now in order to make the values of $s_1(v)$ distinct for distinct vertices we modify $s_1(v)$ by using different powers of primes $p_j$, if required. For each $v\in V$, let $s(v)$ be the modified value of $s_1(v)$. Let $n$ be the least common multiple of $\Set{s(v)}{v\in V}$. Now it is clear that for any $u,v\in V$,
$$\begin{array}{cl}
 & u \text{ is adjacent to  } v \text{ in } G \Longleftrightarrow u,v\in C_i \text{ for some } i\in\set{1,2,\ldots,k}\\
\Longleftrightarrow & p_i \text{ is a factor of both } s(u) \text{ and } s(v) \text{ for some } i\in\set{1,2,\ldots,k}\\
\Longleftrightarrow & \gcd (s(u),s(v))>1 \Longleftrightarrow u \text{ is adjacent to  } v \text{ in } G(n).\\

\end{array}$$
Thus $G$ is isomorphic to the subgraph of $G(n)$ induced by the set $\Set{s(v)}{v\in V}$ of vertices of $G(n)$.
\end{proof}

\begin{exmp}\label{e1}
Consider the graph $G=(V,E)$ in Figure \ref{fig1}. The maximal cliques of $G$ are $C_1=\set{v_1,v_2,v_3,v_4}$, $C_2=\set{v_2,v_3,v_4,v_5,v_6}$, $C_3=\set{v_7,v_8,v_{10}}$, $C_4=\set{v_9,v_{10}}$ and $C_5=\set{v_{10},v_{11}}$. We assign the $i^\text{th}$ prime to the clique $C_i$ for $i=1,2,\ldots,5$. Then for each $v\in V$ we compute $s_1(v)$ and $s(v)$ as in the proof of Theorem \ref{t1}.
$$\begin{array}{|c|c|c|c|c|c|c|c|c|c|c|c|}
\hline
v & v_1 & v_2 & v_3 & v_4 & v_5 & v_6 & v_7 & v_8 & v_9 & v_{10} & v_{11} \\
\hline
s_1(v) & 2 & 2\cdot 3 & 2\cdot 3 & 2\cdot 3 & 3 & 3 & 5 & 5 & 7 & 5\cdot 7\cdot 11 & 11 \\
\hline
s_1(v) & 2 & 6 & 6 & 6 & 3 & 3 & 5 & 5 & 7 & 385 & 11 \\
\hline
s(v) & 2 & 2\cdot 3 & 2^2\cdot 3 & 2\cdot 3^2 & 3 & 3^2 & 5 & 5^2 & 7 & 385 & 11\\
\hline
s(v) & 2 & 6 & 12 & 18 & 3 & 9 & 5 & 25 & 7 & 385 & 11\\
\hline
\end{array}$$

\vspace{-1em}
\begin{figure}[h]
\begin{center}
\includegraphics[scale=0.35]{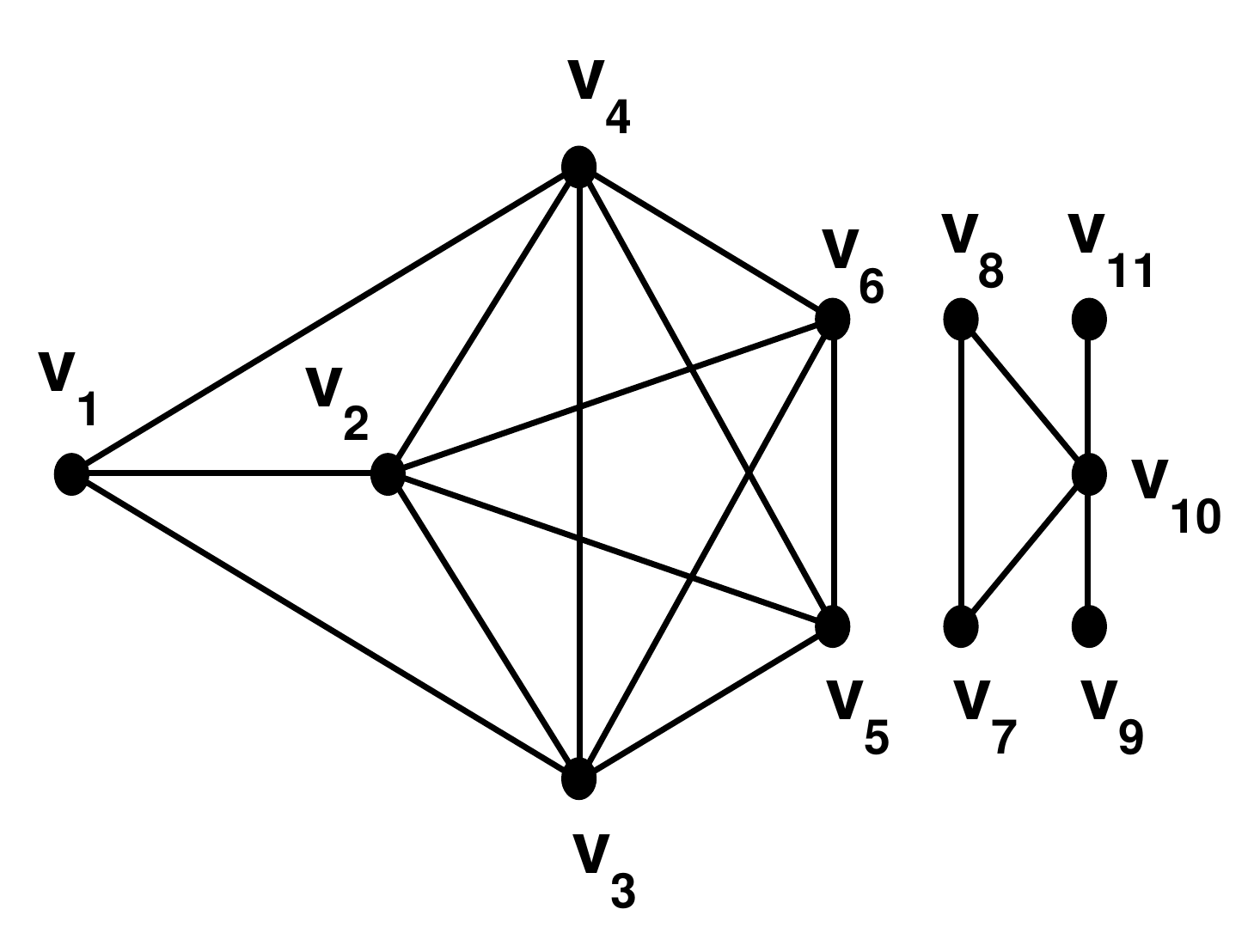}
\caption{The graph $G$ in Example \ref{e1}}\label{fig1}
\end{center}
\end{figure}

\vspace{-1em}
\noindent
So $G$ is isomorphic to the subgraph of $G(n)$ induced by the set $\set{2,3,5,6,7,9,11,12,18,25,385}$ of vertices of $G(n)$, where $n=2^2\cdot 3^2\cdot 5^2\cdot 7\cdot 11=69300$.
\end{exmp}

\noindent
It is important to note that instead of taking all maximal cliques of $G$ in Theorem \ref{t1}, it is sufficient to consider a set of cliques of $G$ which covers both vertices and edges of $G$. 

\begin{defn}\label{deftcc}
Let $G=(V, E)$ be a graph. A set $S$ of cliques of $G$ is called a {\it total clique covering} of $G$ if $S$ covers both $V$ and $E$. The minimum size of a total clique covering of $G$ is called the {\it total clique covering number} of $G$. We denote it by $\theta_t(G)$.
\end{defn}

\noindent
On the other hand, given a finite  sequence of  positive integers, one can construct a simple undirected graph as follows :

\begin{defn}\label{dseq}
Let $\sigma=(a_1, a_2, \ldots, a_m)$ be a finite sequence of positive integers. Then corresponding to this sequence, define a  graph $G[\sigma]=(V, E)$, where $V=\{v_1, v_2, \ldots, v_m\}$, $v_i$ corresponds to $a_i$ ($a_i$ is called the {\em label} of $v_i$) for $i=1, 2, \ldots, m$, and $v_iv_j \in E$ if and only if $i \not=j$ and $\gcd(a_i, a_j) >1$. The graph $G[\sigma]$ is said to be {\em realized} by the sequence $\sigma$.
\end{defn}

\noindent
Now by Theorem \ref{t1}, every graph can be realized by some sequence of positive integers. Also in Definition \ref{dseq}, it is sufficient to take the entries $a_i$ of $\sigma$ square-free if $a_i>1$. Thus for convenience we specify the sequence in the following manner.

\begin{defn}\label{dcseq}
Let $G=(V,E)$ be a graph. Let $|V|>1$ and the graph $G_1$ be obtained from $G$ by deleting isolated vertices of $G$, if there is any. Let $\sigma_1$ be a finite non-decreasing sequence of square-free positive integers greater than $1$ such that $G_1\cong G[\sigma_1]$. If $G$ has $s$ isolated vertices, then we prefix $s$ number of $1$'s in $\sigma_1$ to obtain the sequence $\sigma$. For $s=0$, $\sigma=\sigma_1$. If $G$ is a graph with a single vertex, then $\sigma=(1)$. Then $G\cong G[\sigma]$ and the sequence $\sigma$ is called a {\em coding sequence} of $G$. The entries of $\sigma$, which are greater than $1$, are called {\em non-trivial}. Let $\lambda(\sigma)$ be the least common multiple of all non-trivial entries of $\sigma$.
\end{defn}

\begin{lem}\label{lem1}
Let $\sigma=(\lambda_1,\lambda_2, \ldots, \lambda_m)$ be a coding sequence of a graph $G=(V, E)$. Let $V=\{v_1, v_2, \ldots, v_m\}$, where $v_i$ corresponds to $\lambda_i$ for all $i=1, 2, \ldots, m$ (we write $\lambda_i= \mu(v_i)$). Let $\{p_1, p_2, \ldots, p_k\}$ be the set of all distinct prime factors of $\lambda(\sigma)$.  Suppose $G$ has $s\geqslant 0$ isolated vertices. Define $S_j=\set{v_j}$ for $j=1,2,\ldots,s$ and for each $j=1,2,\ldots,k$, define $S_{s+j}=\Set{v_i\in V}{p_j\text{ divides }\mu(v_i)=\lambda_i}$. Then $S=\Set{S_j}{j=1,2,\ldots,s+k}$ is a total clique covering of $G$ such that $v \in S_{i_1}\cap S_{i_2}\cap \cdots \cap S_{i_r}$ ($i_t>s$, $t=1,2,\ldots,r$) and $v\notin S_j$ for all $j\in\set{s+1,s+2,\ldots,s+k}\smallsetminus\set{i_1,i_2,\ldots,i_r}$ if and only if $\mu(v)=p_{i_1}p_{i_2}\cdots p_{i_r}$ and for each $j=1,2,\ldots, s$, $v\in S_j$ if and only if $\mu(v)=1$.
\end{lem}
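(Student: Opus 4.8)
The plan is to unpack the definitions and verify each piece of the claim directly, since the statement is essentially a bookkeeping exercise built on top of the construction in Definition \ref{dcseq}. First I would confirm that each $S_j$ is a clique of $G$. For the singleton sets $S_1,\ldots,S_s$ this is trivial. For $S_{s+j}$ with $j\in\set{1,\ldots,k}$, I would take any two distinct vertices $v_a,v_b\in S_{s+j}$; by definition $p_j$ divides both $\lambda_a$ and $\lambda_b$, so $\gcd(\lambda_a,\lambda_b)\geqslant p_j>1$, whence $v_av_b\in E$ by Definition \ref{dseq}. Thus $S_{s+j}$ induces a complete subgraph, i.e.\ is a clique.

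Next I would verify the covering property. Every isolated vertex $v_j$ (for $j\leqslant s$) lies in its own set $S_j$, so all isolated vertices are covered; since isolated vertices have label $1$ and contribute no edges, they need no edge-coverage. For the non-isolated part, I would take an arbitrary edge $v_av_b\in E$. By Definition \ref{dseq} this means $\gcd(\lambda_a,\lambda_b)>1$, so some prime $p_j$ divides both labels, placing both endpoints in $S_{s+j}$; hence every edge is covered. Similarly every non-isolated vertex has a non-trivial square-free label divisible by at least one prime $p_j$, so it lies in some $S_{s+j}$. Together these show $S$ covers both $V$ and $E$, so $S$ is a total clique covering by Definition \ref{deftcc}.

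Finally I would establish the membership characterization, which is just a restatement of how square-free labels factor. For a vertex $v$ with $\mu(v)=\lambda$, the crucial observation is that since $\lambda$ is square-free, the condition ``$p_j$ divides $\lambda$'' records exactly the set of primes appearing in the factorization of $\lambda$. So $v\in S_{s+j}$ if and only if $p_j\mid\lambda$, and therefore the set of indices $i_t>s$ with $v\in S_{i_t}$ is precisely the index set of primes dividing $\lambda$. Because $\lambda$ is square-free, specifying this set of primes is equivalent to specifying $\lambda=p_{i_1}p_{i_2}\cdots p_{i_r}$ itself; the ``$v\notin S_j$ for the remaining $j$'' clause is the complementary statement that no other prime divides $\lambda$. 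The case $\mu(v)=1$ corresponds to $v$ being isolated, which by construction lands $v$ in some $S_j$ with $j\leqslant s$ and in none of the prime-indexed sets, giving the last equivalence.

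I do not anticipate a genuine obstacle here, as the result follows from the defining equivalence in Definition \ref{dseq} and the square-free normalization guaranteed by Definition \ref{dcseq}; the only point requiring mild care is the forward-and-backward bijection between square-free integers and their prime support, which is exactly where the square-free hypothesis is used to rule out any ambiguity from prime powers.
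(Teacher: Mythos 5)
Your proposal is correct and follows essentially the same route as the paper's proof: verify each $S_j$ is a clique via the common prime factor $p_j$, verify vertex and edge coverage from the gcd condition in Definition \ref{dseq}, and then read off the membership characterization from the square-free normalization of the labels in Definition \ref{dcseq}. The only cosmetic difference is that the paper also explicitly notes each $S_{s+j}$ is nonempty (since $p_j$ divides $\lambda(\sigma)$, hence divides some $\lambda_i$), a point your argument subsumes without stating.
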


\begin{proof}
We first note that $S_j$ is a clique containing only the vertex $v_j$ for $j=1,2,\ldots,s$. Now for each $j\in\set{1, 2,\ldots,k}$, $p_j$ divides $\lambda_i$ for some $i\in\set{s+1,s+2,\ldots,m}$ as $p_j$ divides $\lambda(\sigma)$. Thus $v_i\in S_{s+j}$. So $S_j\neq\emptyset$, for all $j=1, 2, \ldots s+k$. Also for any two vertices $v_r,v_t\in S_{s+j}$, $p_j$ divides both $\lambda_r$ and $\lambda_t$. So $v_r$ and $v_t$ are adjacent in $G$. Thus each $S_j$ is a clique of $G$. Again since $\{p_1, p_2, \ldots, p_k\}$ is the set of all prime factors of $\lambda(\sigma)$, each vertex $v\in V$ belongs to $S_j$ for some $j\in\set{1,2,\ldots,s+k}$. So $S$ covers $V$. Moreover, two vertices $v_i$ and $v_j$ are adjacent if and only if $\gcd(\lambda_i, \lambda_j) >1$. So $\lambda_i$ and $\lambda_j$ must have at least one common prime factor, say, $p_r$ and hence $v_i$ and $v_j$ both lie in $S_{s+r}$. So $S$ also covers $E$ and hence $S$ is a total clique covering of $G$. Now $v\in S_{i_1}\cap S_{i_2}\cap\cdots\cap S_{i_r}$ ($i_t>s$, $t=1,2,\ldots,r$) and $v\notin S_j$ for all $j\in\set{s+1,s+2,\ldots,s+k}\smallsetminus\set{i_1,i_2,\ldots,i_r}$ if and only if $p_{i_1}p_{i_2}\cdots p_{i_r}$ divides $\mu(v)$ but $p_j$ does not divide $\mu(v)$ for all $j\in\set{s+1,s+2,\ldots,s+k}\smallsetminus\set{i_1,i_2,\ldots,i_r}$. Now since non-trivial entries of $\sigma$ are square-free integers, $\mu(v)$ is a product of distinct primes and hence $\mu(v)=p_{i_1}p_{i_2}\cdots p_{i_r}$. By Definition \ref{dcseq}, for each $j=1,2,\ldots, s$, $v\in S_j$ if and only if $\mu(v)=1$.
\end{proof}

\begin{defn}\label{tcsigma}
Let $\sigma=(\lambda_1,\lambda_2, \ldots, \lambda_m)$ be a coding sequence of a graph $G$. Then the total clique covering $S$ of $G$ as defined in Lemma \ref{lem1} is called the {\em total clique covering of} $G$ {\em corresponding to} the sequence $\sigma$ and is denoted by $S_\sigma$.
\end{defn}

\begin{lem}\label{l1}
Let $G= (V, E)$ be a graph and $S$ be a total clique covering of $G$. Then there exists a coding sequence $\sigma$ of $G$ such that $S=S_\sigma$.
\end{lem}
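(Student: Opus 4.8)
The plan is to reverse the construction of Lemma \ref{lem1}: starting from the given total clique covering $S$, I would manufacture vertex labels $\mu(v)$ by assigning one fresh prime to each clique of $S$ (apart from the forced singletons at isolated vertices), and then check that the coding sequence built from these labels has $S$ as its associated covering $S_\sigma$.

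First I would isolate the trivial part. Observe that if $v$ is an isolated vertex of $G$, then the only clique of $G$ containing $v$ is $\set{v}$ itself; since $S$ covers $V$, it follows that $\set{v}\in S$ for every isolated vertex $v$. Let $S'$ denote the collection of these singletons (there are exactly $s$ of them, $s$ being the number of isolated vertices of $G$) and put $S''=S\smallsetminus S'$. A short argument shows that no member of $S''$ meets an isolated vertex: a clique containing an isolated $v$ must equal $\set{v}$ and hence already lies in $S'$. Thus every clique in $S''$ consists entirely of non-isolated vertices.

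Next I would assign the primes. Writing $S''=\set{T_1,T_2,\ldots,T_k}$ and letting $p_1,\ldots,p_k$ be distinct primes, I define, for each $v\in V$,
$$\mu(v)=\prod\Set{p_j}{v\in T_j},$$
with the empty product read as $1$. Then $\mu(v)=1$ precisely for the isolated vertices (these lie in no $T_j$), while every non-isolated vertex lies in some clique of $S''$ and therefore receives a square-free label greater than $1$. Arranging these labels as a non-decreasing sequence of the non-trivial values with $s$ ones prefixed yields a candidate $\sigma$. To confirm that $\sigma$ is a coding sequence I must check $G\cong G[\sigma]$, i.e. that for $u\neq v$ one has $uv\in E$ if and only if $\gcd(\mu(u),\mu(v))>1$. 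The forward direction uses that $S$ covers $E$: an edge $uv$ lies in some clique of $S$, necessarily some $T_j$, so $p_j$ divides both labels. The backward direction uses that each $T_j$ is a clique: a common prime $p_j$ forces $u,v\in T_j$, whence $uv\in E$.

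Finally I would verify $S=S_\sigma$. Since each clique is non-empty, a prime $p_j$ divides $\lambda(\sigma)$ exactly when $T_j\neq\emptyset$, so the distinct prime factors of $\lambda(\sigma)$ are precisely $p_1,\ldots,p_k$; and by construction $p_j$ divides $\mu(v)$ if and only if $v\in T_j$, so the clique $S_{s+j}=\Set{v\in V}{p_j\text{ divides }\mu(v)}$ of Lemma \ref{lem1} equals $T_j$. Matching the $s$ isolated singletons of $S_\sigma$ with $S'$ then gives $S_\sigma=S'\cup\set{T_1,\ldots,T_k}=S$. The main point requiring care is this last bookkeeping: one must be sure the construction reproduces $S$ \emph{exactly}, not merely a covering isomorphic to it, and this hinges on the clean correspondence ``clique $T_j\leftrightarrow$ prime $p_j$'' together with the fact that, for a coding sequence, $\mu(v)=1$ holds if and only if $v$ is isolated.
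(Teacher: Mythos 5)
Your proposal is correct and follows essentially the same route as the paper's proof: assign a distinct prime to each clique of $S$ apart from the forced singletons at isolated vertices, label each vertex by the product of the primes of the cliques containing it (empty product read as $1$), arrange the labels in non-decreasing order to get $\sigma$, and verify $S=S_\sigma$. Your explicit bookkeeping for isolated vertices (and for possible singleton cliques at non-isolated vertices, which the paper's map $f$ glosses over by setting $f(v)=1$ on all singleton cliques) is a careful refinement of, not a departure from, the paper's argument.
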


\begin{proof}
Let $S=\{C_1, C_2, \ldots, C_{s+k}\}$, where $|C_i|=1$ for $i=1,2,\ldots,s$, $s\geqslant 0$. Let $p_i$ denote the $i^{\text {th}}$ prime. We define a map $f : V \rightarrow \mathbb N$ by $f(v)=1$ if $v\in C_1\cup C_2\cup\cdots\cup C_s$ and $f(v)=p_{i_1}p_{i_2}\cdots p_{i_r}$ if  $v\in C_{s+i_1}\cap C_{s+i_2}\cap\cdots\cap C_{s+i_r}$(where $1 \leq r \leq k$)  and $v\notin C_j$ for all $j\in\set{s+1,s+2,\ldots,s+k}\smallsetminus\set{s+i_1,s+i_2,\ldots,s+i_r}$. Now $S$ being a total clique covering, every vertex is assigned a label in this way. Also note that two vertices $u, v$ are adjacent in $G$ if and only if they lie in some $C_{s+j}\in S$ and consequently, if and only if $f(u)$ and $f(v)$ have a common prime factor $p_j$. Let us arrange the vertices of $G$ according to the non-decreasing order of $\Set{f(v)}{v\in V}$ and define the sequence $\sigma$ by $\sigma= (f(v_1), f(v_2), \ldots, f(v_m))$, where $f(v_1) \leq f(v_2) \leq \cdots \leq f(v_m)$. Then clearly $\sigma$ is a coding sequence of $G$ and $S=S_\sigma$.
\end{proof}

\begin{thm}\label{ktheta}
Let $G$ be a graph with $s\geqslant 0$ isolated vertices and $k$ be the minimum number of prime factors of $\lambda(\sigma)$ among all coding sequences $\sigma$ of $G$. Then $\theta_t(G)=k+s$.
\end{thm}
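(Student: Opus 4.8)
The plan is to prove the two inequalities $\theta_t(G)\leqslant k+s$ and $\theta_t(G)\geqslant k+s$ separately, using Lemma \ref{lem1} and Lemma \ref{l1} as the two halves of a size-respecting correspondence between the coding sequences of $G$ and the total clique coverings of $G$. The governing observation I would record first is that both lemmas preserve cardinalities in a precise way: a coding sequence $\sigma$ whose $\lambda(\sigma)$ has exactly $t$ distinct prime factors yields, via Lemma \ref{lem1}, a total clique covering $S_\sigma$ of cardinality exactly $s+t$ (the $s$ singletons $S_1,\ldots,S_s$ for the isolated vertices together with the $t$ prime cliques $S_{s+1},\ldots,S_{s+t}$); conversely, Lemma \ref{l1} turns any total clique covering into a coding sequence $\sigma$ with $S_\sigma$ equal to that covering, and in its construction each of the non-singleton-label cliques is assigned a distinct prime, so the number of prime factors of $\lambda(\sigma)$ matches the number of those cliques.

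For the upper bound I would take a coding sequence $\sigma_0$ for which $\lambda(\sigma_0)$ has the minimum number $k$ of distinct prime factors; such a $\sigma_0$ exists by the very definition of $k$. By Lemma \ref{lem1}, $S_{\sigma_0}$ is a total clique covering of $G$ with exactly $s+k$ members, whence $\theta_t(G)\leqslant |S_{\sigma_0}|=s+k$.

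For the lower bound I would start from a minimum total clique covering $S_0$, so that $|S_0|=\theta_t(G)$. By Lemma \ref{l1} there is a coding sequence $\sigma_1$ with $S_{\sigma_1}=S_0$; let $k_1$ denote the number of distinct prime factors of $\lambda(\sigma_1)$. Reading the cardinality of $S_{\sigma_1}$ off Lemma \ref{lem1} gives $\theta_t(G)=|S_0|=s+k_1$, and since $k$ is the minimum over all coding sequences of $G$ we have $k_1\geqslant k$, hence $\theta_t(G)\geqslant s+k$. Combining the two inequalities gives $\theta_t(G)=k+s$.

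The step I expect to require the most care is the bookkeeping that makes this size correspondence exact, namely verifying that the number of label-$1$ (singleton) cliques appearing in the construction of Lemma \ref{l1} is precisely the number $s$ of isolated vertices of $G$, and not some larger count, and that the remaining cliques each contribute one genuinely new prime factor to $\lambda(\sigma_1)$. This is exactly where Definition \ref{dcseq} does the work: an isolated vertex is forced to carry the label $1$, while a non-isolated vertex has an incident edge and therefore lies in a clique of size at least two, forcing it to acquire a prime factor. Thus the label-$1$ cliques are exactly the isolated-vertex singletons, the identity $|S_\sigma|=s+(\text{number of prime factors of }\lambda(\sigma))$ holds on the nose, and both inequalities follow at once; the remainder is routine.
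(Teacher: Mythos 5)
Your proof is correct and follows essentially the same route as the paper's: the upper bound comes from applying Lemma \ref{lem1} to a coding sequence realizing the minimum $k$, and the lower bound from applying Lemma \ref{l1} to a minimum total clique covering and counting the prime factors of the resulting $\lambda(\sigma)$. Your closing paragraph, checking that the label-$1$ singleton cliques are exactly the isolated-vertex singletons, makes explicit a bookkeeping point the paper leaves implicit, but it does not change the approach.
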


\begin{proof}
Let $\sigma$ be a coding sequence of $G$ such that the number of prime factors of $\lambda(\sigma)$ is $k$. Then by Lemma \ref{lem1}, $k+s=|S_\sigma|\geqslant\theta_t(G)$. Again let $S$ be a total clique covering of $G$ such $|S|=\theta_t(G)$. Then by Lemma \ref{l1}, there is a coding sequence $\sigma$ of $G$ such that $S=S_\sigma$ and it follows from the proof of Lemma \ref{l1} that there are $|S|-s$ prime factors of $\lambda(\sigma)$. Thus $k\leqslant |S|-s=\theta_t(G)-s$. Therefore $\theta_t(G)=k+s$.
\end{proof}

\begin{defn}\label{dcode}
Let $G=(V,E)$ be a graph with $|V|=m$ and $s\geqslant 0$ isolated vertices. Let $k=\theta_t(G)-s$. Let $S=\set{C_1,C_2,\ldots,C_{k+s}}$ be a total clique covering of $G$ such $|S|=\theta_t(G)$ and $|C_i|=1$ for $i=1,2,\ldots,s$. Now as in the proof of Lemma \ref{l1}, there are $k!$ ways of assigning first $k$ primes to the cliques $C_{s+1},C_{s+2},\ldots,C_{s+k}$ to obtain at most $k!$ different coding sequences. Let $\sigma[S]$ be the least among them in the lexicographic ordering in $\Real^m$. Then $\sigma[S]$ is called the {\em coding sequence of $G$ with respect to $S$}. Let $\{S_1, S_2, \ldots, S_r\}$ be the set of all total clique coverings of $G$ such that $|S_i|= \theta_t(G)$ for all $i=1, 2, \ldots, r$. Let $\sigma(G)$ be the least element of $\Set{\sigma[S_i]}{i=1, 2, \ldots,r}$ in the lexicographic ordering in $\Real^m$. Then $\sigma(G)$ is called the {\em code} of the graph $G$.
\end{defn}

\noindent
For example, $(2,3,3,5,5,6,6,6,7,11,385)$ and $(2,3,3,5,5,7,10,10,10,11,231)$ are two coding sequences of the graph $G$ in Example \ref{e1} with respect to the total clique covering $S=\{C_1, C_2, \ldots, C_5\}$. It is easy to see that $\theta_t(G)=5$ and $S$ is the only total clique covering with 5 cliques. The code of $G$ is $(2,2,3,3,5,7,10,10,10,11,231)$. Note that for any  graph $G$, we have $G \cong G[\sigma(G)]$.

\begin{thm}\label{unique}
Let $G_1$ and $G_2$ be two graphs. Then $G_1 \cong G_2$ if and only if $\sigma(G_1)=\sigma(G_2)$.
\end{thm}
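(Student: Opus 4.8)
The plan is to prove the two directions separately, the converse being immediate and the forward direction requiring that the whole construction of the code be recognized as an isomorphism invariant. For the ``if'' direction, suppose $\sigma(G_1)=\sigma(G_2)$. By the remark preceding the theorem, every graph $G$ satisfies $G\cong G[\sigma(G)]$; hence $G_1\cong G[\sigma(G_1)]=G[\sigma(G_2)]\cong G_2$, so $G_1\cong G_2$. All the work therefore lies in the ``only if'' direction.

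For that direction I would fix an isomorphism $\phi:G_1\to G_2$ and track its action on every piece of data entering Definition \ref{dcode}. Since $\phi$ is edge- and non-edge-preserving, it carries cliques to cliques, and a vertex is isolated in $G_1$ exactly when its image is isolated in $G_2$; thus the two graphs share the same vertex count $m$ and the same number $s$ of isolated vertices. More importantly, for any total clique covering $S=\{C_1,\ldots,C_t\}$ of $G_1$ the family $\phi(S)=\{\phi(C_1),\ldots,\phi(C_t)\}$ is a total clique covering of $G_2$ of the same size, and $S\mapsto\phi(S)$ is a bijection between the total clique coverings of $G_1$ and those of $G_2$. In particular $\theta_t(G_1)=\theta_t(G_2)$, and $\phi$ restricts to a bijection between the minimum total clique coverings of the two graphs.

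The crux is to show that $\sigma[S]=\sigma[\phi(S)]$ for every minimum total clique covering $S$ of $G_1$. The key observation is that the label $f(v)$ assigned to a vertex $v$ in the proof of Lemma \ref{l1} depends only on the set of cliques of $S$ containing $v$ --- its incidence pattern --- together with the chosen assignment of the first $k$ primes to the non-trivial cliques. Because $\phi$ preserves incidence, i.e.\ $v\in C_j$ if and only if $\phi(v)\in\phi(C_j)$, a fixed prime assignment produces exactly the same multiset of labels for $(G_1,S)$ as for $(G_2,\phi(S))$. Ranging over all $k!$ prime assignments, then passing to the non-decreasing rearrangement and taking the lexicographic minimum, we conclude $\sigma[S]=\sigma[\phi(S)]$.

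Finally I would assemble these facts. The code $\sigma(G_1)$ is the lexicographic minimum of the set $\{\sigma[S]: S\text{ a minimum total clique covering of }G_1\}$, and by the preceding steps this set coincides with $\{\sigma[\phi(S)]:S\text{ as above}\}=\{\sigma[S']:S'\text{ a minimum total clique covering of }G_2\}$, whose lexicographic minimum is $\sigma(G_2)$; hence $\sigma(G_1)=\sigma(G_2)$. I expect the main obstacle to be the third paragraph: pinning down precisely that $\sigma[S]$ is determined by the incidence structure of $(V,S)$ alone, so that the prime-assignment-and-sort procedure commutes with $\phi$. Once this invariance is secured, the bijection between minimum coverings and the stability of the lexicographic minimum reduce the remainder to routine bookkeeping.
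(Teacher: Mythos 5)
Your proof is correct and takes the same route as the paper, which disposes of the theorem in one line (``The proof follows from Definition \ref{dcode}''): the backward direction via $G \cong G[\sigma(G)]$, and the forward direction by checking that every ingredient of the code --- total clique coverings, $\theta_t$, the prime-assignment labels, and the lexicographic minima --- is invariant under isomorphism. Your write-up simply supplies the bookkeeping the paper leaves implicit, namely the bijection $S \mapsto \phi(S)$ on minimum total clique coverings and the fact that $\sigma[S]=\sigma[\phi(S)]$ because the labels depend only on the clique-incidence pattern of each vertex.
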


\begin{proof}
The proof follows from Definition \ref{dcode}.
\end{proof}

\begin{defn}\label{dpol}
Let $G=(V,E)$ be a graph with $s\geqslant 0$ isolated vertices. Let $S=\set{C_1,C_2,\ldots,C_{s+k}}$ be a total clique covering of $G$, where $|C_i|=1$ for $i=1,2,\ldots,s$. If $v\notin C_1\cup C_2\cup\cdots\cup C_s$ , let $m(v)=\prod\Set{x_j}{v\in C_{s+j}}$ be a monomial in the polynomial semiring $\mathbb{Z}_0^+[x_1,x_2,\ldots,x_k]$ of $k$ variables over the semiring of non-negative integers with usual addition and multiplication. For any $v \in C_i$, $i=1, 2, \ldots, s$, define $m(v)=1$. Now we define
$$f(G, S)=f(x_1,x_2,\ldots,x_k)=\sum\limits_{v\in V} m(v).$$
Then $f(G, S)$ is said to be a {\em polynomial representation} of $G$ with respect to $S$.
\end{defn}

\noindent
Consider the graph $G$ in Example \ref{e1}. Then $f(G, S)=x_1+2x_2+2x_3+x_4+x_5+3x_1x_2+x_3x_4x_5$ is a polynomial representation of $G$ with respect to $S=\set{C_1,C_2,C_3,C_4,C_5}$. Now let $G=(V,E)$ be a graph and $\sigma=(\lambda_1,\lambda_2,\ldots,\lambda_m)$ be a coding sequence of $G$. From the construction of $S_{\sigma}$ (cf. Lemma \ref{lem1}, Definition \ref{tcsigma}) and by Definition \ref{dpol} it follows that $f(G, S_{\sigma})$ can also be obtained from $\sigma$ by replacing primes $p_i$ by $x_i$ ($i<j \Leftrightarrow p_i<p_j$) and commas by the addition symbol. It is important to note that the constant term of $f(G,S)$ in Definition \ref{dpol} is the number of isolated vertices of $G$.

\begin{defn}\label{dnpoly}
Let $G$ be a graph and $\sigma(G)$ be the code of $G$. Then the polynomial representation of $G$ corresponding to $\sigma(G)$, i.e., $f(G, S_{\sigma(G)})$ is called the {\em normal polynomial representation} or the {\em canonical polynomial representation} of $G$ and is denoted by $F(G)$.
\end{defn}

\noindent
The normal polynomial representation of the graph $G$ in Example \ref{e1} is given by 
$$F(G)=2x_1+2x_2+x_3+x_4+x_5+3x_1x_3+x_2x_4x_5.$$
The following interesting observations are immediate from Definition \ref{dnpoly}.

\begin{obs}\label{disconnected poly}
A graph $G$ is disconnected if and only if $F(G)= f_1 + f_2$, where $f_1$ and $f_2$ are polynomials with no common variables between them. The same is true for $f(G, S)$ for any total clique covering $S$ of $G$.
\end{obs}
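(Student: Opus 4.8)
The plan is to exploit the single structural fact underlying Definition \ref{dpol}: for any total clique covering $S=\{C_1,\dots,C_{s+k}\}$ the variable $x_j$ corresponds to the clique $C_{s+j}$, each vertex $v$ carries the monomial $m(v)=\prod\{x_j : v\in C_{s+j}\}$ (with $m(v)=1$ when $v$ is isolated), and, crucially, two vertices are adjacent if and only if their monomials share a variable. I would prove the equivalence directly in both directions from this correspondence. Since the argument never uses the minimality or canonicity of $\sigma(G)$ --- only that $S$ is a total clique covering --- it will apply verbatim to $f(G,S)$ for an arbitrary total clique covering $S$, which yields the last sentence of the observation for free.

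For the forward direction, I would suppose $G$ is disconnected and partition its vertex set into two nonempty sets $V_1,V_2$ with no edge between them (possible precisely because $G$ is disconnected). The key point is that no clique of $S$ can straddle this partition: a clique is a complete subgraph, so if some $C_{s+j}$ contained a vertex of $V_1$ and a vertex of $V_2$, those two vertices would be adjacent, contradicting the choice of $V_1,V_2$. Hence every $C_{s+j}$ lies entirely in $V_1$ or entirely in $V_2$, so each variable $x_j$ occurs only in monomials from a single side. Setting $f_i=\sum_{v\in V_i} m(v)$ then gives $F(G)=f_1+f_2$ with $f_1,f_2$ nonzero (each $V_i$ is nonempty) and with no variable in common.

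For the converse, I would suppose $F(G)=f_1+f_2$ with $f_1,f_2$ nonzero and sharing no variable, and let $X_1,X_2$ denote their disjoint variable sets. I assign each vertex $v$ to $V_1$ or $V_2$ according to which summand its monomial occurrence $m(v)$ belongs to; the isolated vertices, whose monomial is the constant $1$, are distributed so as to match the constant terms of $f_1$ and $f_2$. This partitions $V$ into two nonempty sets. If some $u\in V_1$ were adjacent to some $w\in V_2$, their monomials would share a variable $x_j$, forcing $x_j\in X_1\cap X_2=\emptyset$, a contradiction; hence no edge runs between $V_1$ and $V_2$, and $G$ is disconnected.

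The step I expect to require the most care is the bookkeeping of the constant term, i.e.\ of the isolated vertices. I must make sure both $f_1$ and $f_2$ are genuinely nonzero and that the constant term --- which carries no variable and is therefore compatible with either side --- is allocated consistently. This also settles the degenerate cases transparently: a graph consisting only of isolated vertices has $F(G)$ equal to a positive integer constant that splits as a sum of two smaller positive constants (and is indeed disconnected), while the single-vertex graph has $F(G)=1$, which admits no such nontrivial splitting over the non-negative integers, matching the fact that it is connected.
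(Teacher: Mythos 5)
Your proof is correct and takes the approach the paper leaves implicit: the paper gives no proof at all, declaring the observation ``immediate'' from Definition \ref{dnpoly}, and your argument---cliques cannot straddle a disconnection, so variables separate, and conversely a shared variable between monomials forces adjacency---is precisely the natural fleshing-out of that claim. Your attention to the two points the paper glosses over (that $f_1,f_2$ must be taken nonzero for the equivalence to be non-vacuous, and that the constant term, i.e.\ the isolated vertices, must be allocated between the summands) is a genuine improvement in rigor rather than a different route.
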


\begin{obs}\label{bipartite poly}
A graph $G$ is bipartitie if and only if $F(G)=f_1+f_2$, where monomials belonging to the same $f_i$ have no common variables, for i=1,2. The same is true for $f(G, S)$ for any total clique covering $S$ of $G$.
\end{obs}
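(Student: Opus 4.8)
The plan is to reduce the stated equivalence to a single dictionary between the combinatorics of $G$ and the algebra of the monomials, the dictionary being precisely the content of the total clique covering construction. Throughout I write $m(v)$ for the monomial attached to a vertex $v$ as in Definition \ref{dpol}, so that for any total clique covering $S=\{C_1,\ldots,C_{s+k}\}$ of $G$ one has $f(G,S)=\sum_{v\in V}m(v)$, and $F(G)$ is the special case $S=S_{\sigma(G)}$.

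First I would record the key correspondence: for distinct vertices $u$ and $v$, they are adjacent in $G$ if and only if $m(u)$ and $m(v)$ share a common variable. For the forward implication, since $S$ covers $E$ the edge $uv$ lies in some member of $S$; this member cannot be one of the singleton cliques $C_1,\ldots,C_s$ (which contain only isolated vertices), so it is some $C_{s+j}$, and then $x_j$ divides both $m(u)$ and $m(v)$. Conversely, if $x_j$ divides both monomials then $u,v\in C_{s+j}$, and since $C_{s+j}$ is a clique $u$ and $v$ are adjacent. I would also note that an isolated vertex carries the monomial $1$, which shares no variable with anything, matching the fact that it has no neighbours.

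With this in hand the two directions become short. For the forward direction I would take a bipartition $V=V_1\sqcup V_2$ into independent sets and set $f_i=\sum_{v\in V_i}m(v)$; if two monomials of the same $f_i$ shared a variable, the correspondence would make the two underlying distinct vertices adjacent inside an independent set, a contradiction, so $F(G)=f_1+f_2$ has the required form. For the converse I would read each monomial of a valid decomposition $F(G)=f_1+f_2$ back to the vertex producing it, counting monomials with multiplicity (one per vertex), thereby obtaining a partition $V=V_1\sqcup V_2$; the absence of shared variables inside each $f_i$ then makes each $V_i$ independent, so $G$ is bipartite. Because nothing above used more than the defining properties of a total clique covering and the construction of $m(v)$, the identical argument covers $f(G,S)$ for every total clique covering $S$.

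The main obstacle I anticipate is purely bookkeeping: the decomposition is a splitting of the multiset of vertex-monomials rather than of distinct monomials, so I must be careful that a monomial occurring with coefficient larger than $1$ (corresponding to several mutually adjacent vertices that share all their variables) is never repeated within a single $f_i$, while the copies of the constant $1$ coming from isolated vertices may be assigned to $f_1$ or $f_2$ freely. Once the multiplicity convention is fixed and adjacency is identified with variable-sharing, no genuine difficulty remains.
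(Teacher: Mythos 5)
Your proposal is correct. Note that the paper offers no proof of this observation at all---it is declared ``immediate'' from Definition \ref{dnpoly}---so your write-up is exactly the argument the authors leave implicit: the dictionary that two distinct vertices are adjacent if and only if their monomials share a variable (edge covered by some non-singleton clique $C_{s+j}$ in one direction; $C_{s+j}$ being a clique in the other), applied to a bipartition in the forward direction and read back to a vertex partition in the converse. One point in your proposal deserves emphasis as more than bookkeeping: the multiset convention you adopt (one monomial per vertex, counted with multiplicity) is genuinely necessary for the statement to be true as written. Under the naive polynomial reading, $F(K_3)=3x_1$ admits the decomposition $f_1=2x_1$, $f_2=x_1$, in which the distinct monomials inside each $f_i$ vacuously share no variables, wrongly certifying $K_3$ as bipartite; your stipulation that a repeated non-constant monomial must not be repeated within a single $f_i$ (since the several vertices it represents are mutually adjacent) is exactly what excludes this. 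So your proof is not only correct but also pins down an imprecision that the paper glosses over.
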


\noindent
We now proceed to obtain a formula for $F(G(n))$.

\begin{thm}\label{thetagn}
Let $n=p_1^{r_1}p_2^{r_2}\cdots p_k^{r_k}\in M$, where $p_i$'s are distinct primes. Then $\theta_t(G(n))= k$ and there is only one total clique covering of $G(n)$ with precisely $k$ cliques.
\end{thm}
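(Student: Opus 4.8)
The plan is to prove the two assertions separately: first the value $\theta_t(G(n))=k$ by matching an explicit upper bound against a lower bound coming from an independent set, and then uniqueness by pinning down every clique of an optimal covering using the ``pure'' prime-power divisors.

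For the upper bound I would, for each $i=1,2,\ldots,k$, set $D_i=\Set{d\in V(n)}{p_i\mid d}$. Any two divisors lying in $D_i$ share the factor $p_i$, hence have $\gcd$ greater than $1$ and are adjacent, so $D_i$ is a clique. Every vertex $d>1$ has some prime factor $p_i$ and therefore lies in the corresponding $D_i$, while every edge $\set{a,b}$ comes from a common prime factor $p_i$, so both endpoints lie in $D_i$. Thus $S=\set{D_1,\ldots,D_k}$ covers both $V$ and $E$, giving $\theta_t(G(n))\leqslant k$. For the matching lower bound I would use that the prime divisors $p_1,\ldots,p_k$ are pairwise non-adjacent (since $\gcd(p_i,p_j)=1$ for $i\neq j$) and hence form an independent set of size $k$; no clique can contain two of them, so any total clique covering needs at least $k$ cliques merely to cover these $k$ vertices. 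This yields $\theta_t(G(n))\geqslant k$ and therefore equality.

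For uniqueness, let $T=\set{T_1,\ldots,T_k}$ be any total clique covering with exactly $k$ cliques. Counting the incidences between the primes $p_i$ and the cliques containing them (each prime lies in at least one clique, and each clique contains at most one prime) forces every prime to lie in exactly one clique and every clique to contain exactly one prime; after relabelling I may assume $p_i\in T_i$. Then $T_i\subseteq D_i$, because every $v\in T_i$ is adjacent to $p_i$ and is therefore divisible by $p_i$. For the reverse inclusion I would single out the vertex $p_i^{r_i}$, whose only prime factor is $p_i$: if $p_i^{r_i}\in T_j$ then $T_j\subseteq D_j$ forces $p_j\mid p_i^{r_i}$, i.e. $j=i$, so $p_i^{r_i}$ lies in no clique other than $T_i$. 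Now for any $d\in D_i$ with $d\neq p_i^{r_i}$, the vertices $d$ and $p_i^{r_i}$ share the factor $p_i$ and so form an edge, which must be covered by a single clique; that clique can only be $T_i$, whence $d\in T_i$. Thus $D_i\subseteq T_i$, so $T_i=D_i$ for every $i$ and $T=S$.

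The main obstacle is precisely the reverse inclusion $D_i\subseteq T_i$: covering the vertices alone does not force it, and the key device is that the maximal prime power $p_i^{r_i}$ is itself a vertex whose sole prime factor is $p_i$, so that the edge-covering requirement channels every $p_i$-divisible vertex into $T_i$. The degenerate case $n=p_1$ prime, where $G(n)$ is a single isolated vertex, is subsumed by the same argument: here $k=1$, $D_1=\set{p_1}$, and the reasoning gives $\theta_t(G(n))=1$ with a unique covering.
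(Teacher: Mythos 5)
Your proposal is correct and follows essentially the same route as the paper: the cliques $D_i=\Set{d\in V(n)}{p_i\mid d}$ give the upper bound, the pairwise non-adjacent primes $p_1,\ldots,p_k$ give the lower bound, and uniqueness is forced by edge-covering after showing $T_i\subseteq D_i$. The only cosmetic difference is that you anchor the edge-covering step at the vertex $p_i^{r_i}$, whereas the paper uses the prime $p_i$ itself (which, by your own counting argument, already lies in no clique other than $T_i$, so the maximal power is not actually needed); both versions close the argument identically.
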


\begin{proof}
If $n$ is prime then the result is obvious. Suppose $n$ is not a prime number. Now the vertices of $G(n)$ correspond to all the divisors of $n$, greater than $1$. So there are $k$ vertices labeled $p_i$, $i=1, 2, \ldots, k$. Clearly, no two of them can lie in the same clique as $\gcd(p_i, p_j)=1$  for all $i \neq j$. So in any total clique covering of $G(n)$, these $k$ vertices will be in $k$ different cliques. In other words, any total clique covering has at least $k$ cliques. Now let $C_i=\Set{v \in V(G(n))}{p_i \text{ divides } v}$. Then $S=\Set{C_i}{i=1, 2, \ldots,k}$ is easily seen to be a total clique covering of $G(n)$. So we have a total clique covering consisting of precisely $k$ cliques. Hence $\theta_t(G(n))=k$.

\noindent
Now we show that there is only one total clique covering of $G(n)$ containing precisely $k$ cliques. Suppose there is another total clique covering $D= \set{D_1, D_2, \ldots, D_k}$ of $G$ with exactly $k$ cliques. Here also, the vertices labeled $p_i$, $i=1, 2, \ldots, k$, are in distinct cliques. Without loss of generality, let $p_i \in D_i$ for $i=1, 2, \ldots, k$. Now for any $i$, $p_i$ is not adjacent to   those vertices which are not in $C_i$. So $D_i \subseteq C_i$ for all $i=1, 2, \ldots, k$. Now suppose that for some $j$, there is a vertex $v$ in $C_j$ which is not in $D_j$. Now $D$ being a total clique covering, $v$ has to lie in at least one $D_l$, where $l \not=j$. Since a total clique covering covers all the edges, the edge between $v$ and $p_j$ will be covered. $v \not\in D_j$ implies that $v$ and $p_j$ both lie in some $D_l$, where $l \neq j$. But this contradicts the fact that $p_j$ is not adjacent to $p_l$. So $D_i= C_i$ for all $i=1, 2, \ldots, k$ and hence $D=S$.
\end{proof}

\noindent
Let $\alpha(G)$ denote the maximum size of an independent set in a graph $G$. In general $\theta_t(G) \geqslant \alpha(G)$ for any  graph $G$. The proof of the following proposition is similar to that of Theorem \ref{thetagn} and so omitted.

\begin{prop}\label{prop1}
Let $G$ be a graph with $\alpha(G)=k$. If $\{v_1, v_2,\ldots, v_k\}$ is an independent set and $S=\{S_1, S_2, \ldots, S_k\}$ is a total clique covering of $G$ such that each $v_i$ lies only in $S_i$ among cliques in $S$ for each $i=1, 2, \ldots, k$, then $\theta_t(G)=\alpha(G)=k$ and $S$ is the only total clique covering containing exactly $k$ cliques.
\end{prop}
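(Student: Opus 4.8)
The plan is to first pin down the numerical equality $\theta_t(G)=\alpha(G)=k$, and then to establish the uniqueness of $S$, following closely the pattern of the proof of Theorem~\ref{thetagn}.

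For the equality, I would argue as follows. Since $\{v_1,\ldots,v_k\}$ is an independent set, no two of these vertices can lie in a common clique, so any total clique covering of $G$ must use at least $k$ distinct cliques merely to cover $v_1,\ldots,v_k$; this is exactly the general inequality $\theta_t(G)\geqslant\alpha(G)=k$ noted above. On the other hand, the hypothesis supplies a total clique covering $S$ with $|S|=k$, whence $\theta_t(G)\leqslant k$. Combining the two, $\theta_t(G)=\alpha(G)=k$.

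For uniqueness, let $D=\{D_1,\ldots,D_k\}$ be any total clique covering of $G$ with exactly $k$ cliques. Because $v_1,\ldots,v_k$ are pairwise non-adjacent, a single clique of $D$ can contain at most one of them; as there are $k$ vertices and $k$ cliques, after relabelling we may assume $v_i\in D_i$ for every $i$, and moreover each $v_i$ lies in no $D_l$ with $l\neq i$ (otherwise $v_i$ and $v_l$ would both lie in the clique $D_l$ and hence be adjacent, contradicting independence). By hypothesis, each $v_i$ likewise lies only in $S_i$ among the cliques of $S$.

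It then remains to prove $D_i=S_i$ for all $i$, which I would do by two symmetric inclusions, each driven by the requirement that a total clique covering covers every edge. For $D_i\subseteq S_i$: take $u\in D_i$; if $u\neq v_i$ then $u$ is adjacent to $v_i$ (both lie in the clique $D_i$), so the edge $u v_i$ is covered by some $S_l$; since $v_i$ lies only in $S_i$, we must have $l=i$ and thus $u\in S_i$, while trivially $v_i\in S_i$. Interchanging the roles of $S$ and $D$ and using that $v_i$ lies only in $D_i$ yields $S_i\subseteq D_i$ in the same manner. Hence $D_i=S_i$ for all $i$, i.e. $D=S$, so $S$ is the only total clique covering of $G$ with exactly $k$ cliques. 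The one point demanding care, and the crux of the whole argument just as in Theorem~\ref{thetagn}, is the observation that in a minimum-sized total clique covering each member of the witnessing independent set occupies a single clique; once this is in hand, both inclusions follow immediately from edge-coverage.
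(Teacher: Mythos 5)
Your proof is correct and is precisely the argument the paper intends: the paper omits the proof, stating only that it is similar to that of Theorem \ref{thetagn}, and your adaptation (independence forces at least $k$ cliques, pigeonhole alignment $v_i\in D_i$, then $D_i=S_i$ via edge-coverage) is exactly that adaptation. Your only cosmetic deviation is proving both inclusions symmetrically by edge-coverage, whereas Theorem \ref{thetagn} gets one inclusion from the fact that all neighbours of $v_i$ lie in $S_i$; these are equivalent observations.
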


\noindent
Now we provide a formula for $F(G(n))$.

\begin{thm}\label{fgn}
Let $n=p_1^{r_1}p_2^{r_2}\cdots p_k^{r_k}\in M$, where $p_i$'s are distinct primes and $r_1\geqslant r_2\geqslant\cdots\geqslant r_k\geqslant 1$, $k\geqslant 1$. Then $F(G(n))$ contains all the monomials $x_{i_1}x_{i_2}\cdots x_{i_s}$, where $\set{i_1,i_2,\ldots,i_s}\subseteq\set{1,2,\ldots,k}$, $s\geqslant 1$ with the coefficient $r_{i_1}r_{i_2}\cdots r_{i_s}$, i.e.,
$$F(G(n))=\sum\limits_{\atop{i_1<i_2<\cdots <i_s}{\emptyset\neq\set{i_1,i_2,\ldots,i_s}\subseteq\set{1,2,\ldots,k}}} r_{i_1}r_{i_2}\cdots r_{i_s}\ x_{i_1}x_{i_2}\cdots x_{i_s}.$$
unless $n$ is prime. If $n$ is prime, then $F(G(n))=1$.
\end{thm}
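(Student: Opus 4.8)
The plan is to reduce everything to the unique minimum total clique covering supplied by Theorem \ref{thetagn} and then to count the divisors of $n$ according to their sets of prime factors. First I would dispose of the degenerate case: if $n$ is prime then $V(n)=\set{n}$, so $G(n)$ is a single isolated vertex, whence its code is $(1)$ and $F(G(n))=1$ by Definitions \ref{dcode} and \ref{dpol} (the constant term counts the lone isolated vertex). For the remainder assume $n$ is not prime. I would then observe that $G(n)$ has no isolated vertices: every divisor $v>1$ shares a prime with the vertex $p_i$ for any prime factor $p_i\mid v$, and when $v=p_i$ itself it is adjacent to $p_i^2$ or to $p_ip_j$, at least one of which divides $n$. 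Hence $s=0$ in all the relevant definitions, which matches the absence of a constant term in the claimed formula.

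By Theorem \ref{thetagn} we have $\theta_t(G(n))=k$, and $S=\set{C_1,\ldots,C_k}$ with $C_i=\Set{v\in V(n)}{p_i\mid v}$ is the only total clique covering of size $k$. Thus Definition \ref{dcode} has a single candidate covering, the code $\sigma(G(n))$ equals $\sigma[S]$, and consequently $F(G(n))=f(G(n),S)$ for the canonically labelled $S$. The core computation is then a counting argument: by Definition \ref{dpol} a vertex $v$ (a divisor $>1$) contributes the monomial $m(v)=\prod\Set{x_j}{p_j\mid v}$, which depends only on the set of prime divisors of $v$. So for a fixed nonempty $\set{i_1<\cdots<i_s}\subseteq\set{1,\ldots,k}$, the coefficient of $x_{i_1}\cdots x_{i_s}$ equals the number of divisors $v>1$ of $n$ whose set of prime factors is exactly $\set{p_{i_1},\ldots,p_{i_s}}$. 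Such a divisor has the form $v=p_{i_1}^{a_1}\cdots p_{i_s}^{a_s}$ with $1\leqslant a_t\leqslant r_{i_t}$, and there are precisely $r_{i_1}\cdots r_{i_s}$ of them, which yields the stated coefficient.

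The step I expect to be the real obstacle is justifying that the canonical labelling forced by the lexicographically minimal code pairs the variable $x_i$ with the clique $C_i$ of exponent $r_i$, so that the coefficient comes out as $r_{i_1}\cdots r_{i_s}$ rather than a permuted product; this is where the hypothesis $r_1\geqslant r_2\geqslant\cdots\geqslant r_k$ is essential. Using the remark that $f(G,S_\sigma)$ is read off $\sigma$ by sending the $i$-th prime $q_i$ to $x_i$, I would argue that a vertex label is a single prime $q_j$ exactly when its prime support is the one clique assigned $q_j$; the prime powers $p_i,p_i^2,\ldots,p_i^{r_i}$ then show that the label $q_j$ occurs with multiplicity exactly $r_{\pi^{-1}(j)}$, where $\pi$ records which prime is assigned to which clique. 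No composite label (a product of at least two distinct primes, hence at least $6$) can equal a single prime, so these multiplicities are insensitive to the rest of the sequence. Lexicographic minimality of the sorted label sequence forces a greedy choice—maximize the number of copies of $q_1=2$, then of $q_2=3$, and so on—because more copies of a smaller value always wins at the first point of difference; this assigns the smallest prime to the clique of largest exponent, the next prime to the next largest, and so forth. Since $r_1\geqslant\cdots\geqslant r_k$, this makes $\pi$ the identity permutation (unique up to swapping cliques of equal exponent, which leaves the coefficients unchanged), so $x_i$ corresponds to $C_i$ and the formula follows; a quick check of the boundary case $k=1$, a prime power giving $F(G(n))=r_1x_1$, completes the argument.
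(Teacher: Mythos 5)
Your proposal is correct and takes essentially the same route as the paper's proof: reduce to the unique minimum total clique covering given by Theorem \ref{thetagn}, argue that lexicographic minimality of the code together with $r_1\geqslant r_2\geqslant\cdots\geqslant r_k$ forces the $i$-th prime to be paired with the clique $C_i$, count the divisors of $n$ by their exact set of prime factors to get the coefficients $r_{i_1}r_{i_2}\cdots r_{i_s}$, and treat the prime and $k=1$ cases separately. If anything, your greedy justification of the prime-to-clique assignment (prime-power vertices fixing the multiplicity of each single-prime label, composite labels being determined by earlier choices) is spelled out in more detail than in the paper, which dismisses that step with ``it is easy to see.''
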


\begin{proof}
If $n$ is prime, then $G(n)\cong K_1$ and so $F(G(n))=1$. Suppose $n$ is not prime. Now by Theorem \ref{thetagn}, $\theta_t(G(n))=k$ and there is only one total clique covering, say, $S$ of $G(n)$ with precisely $k$ cliques. Let $S=\{C_1, C_2, \ldots, C_k\}$ where $C_i=\Set{v \in V(G(n))}{p_i\text{ divides } v}$ for $i=1, 2, \ldots, k$. So $\sigma(G(n))$ is $\sigma[S]$.  First, let $k>1$.  Now $\sigma[S]$ is the least (in the lexicographic ordering) among the coding sequences of $G$ obtained from $S$. By Definition \ref{dcode}, any such coding sequence involves the primes $\{t_1, t_2, \ldots, t_k\}$, where $t_i$ is the $i^{\text {th}}$ prime. Now for any $p_i$, there are $r_i$ elements (namely, $p_i, p_i^2, \ldots, p_i^{r_i}$) which belongs to only the clique $C_i$. So considering the labellings and the fact that $r_1\geqslant r_2\geqslant\cdots\geqslant r_k\geqslant 1$, it is easy to see that the coding sequence will be the least in the lexicographic ordering if we assign $t_i$ to the clique $C_i$. So the vertex corresponding to a number $p_{i_1}^{q_{i_1}}p_{i_2}^{q_{i_2}}\cdots p_{i_s}^{q_{i_s}}$, ($1 \leqslant q_{i_j} \leqslant r_{i_j}$ for $j=1, 2, \ldots, s$ where $1 \leqslant s \leqslant k$), is assigned the label $t_{i_1}t_{i_2} \cdots t_{i_s}$. In other words, to find out $\sigma(G(n))$, we first consider the set of all divisors $\{m_1, m_2, \ldots, m_r\}$ (say) of $n$, greater than $1$, where $r=(r_1+1)(r_2+1)\cdots (r_k+1)-1$. Then in each $m_j$, we first replace $p_i$ by $t_i$, then make the resultant entries square-free and arrange them in non-decreasing order. This gives us $\sigma(G(n))=(\lambda_1,\lambda_2,\ldots,\lambda_r)$, where each $\lambda_i$ is a product of primes of the form $t_{i_1}t_{i_2}\cdots t_{i_s}$, ($1\leqslant s\leqslant k$) and this particular number repeats, say, $\nu$ times in the sequence $\sigma(G(n))$, where $\nu$ is the number of  divisors of $n$ (greater than $1$) which are of the form $p_{i_1}^{q_{i_1}}p_{i_2}^{q_{i_2}}\cdots p_{i_s}^{q_{i_s}}$, $1 \leqslant q_{i_j} \leqslant r_{i_j}$ for $j=1, 2, \ldots, s$, i.e., $\displaystyle{\nu=\prod\limits_{j=1}^s r_{i_j}}$. 

\noindent
Thus by Definition \ref{dnpoly} we have $F(G(n))=\sum\limits_{\atop{i_1<i_2<\cdots <i_s}{\emptyset\neq\set{i_1,i_2,\ldots,i_s}\subseteq\set{1,2,\ldots,k}}} r_{i_1}r_{i_2}\cdots r_{i_s}\ x_{i_1}x_{i_2}\cdots x_{i_s}.$  

\vspace{0.5em}\noindent
Finally, if $k=1, r_1>1$, then $G(n)\cong K_{r_1}$. So $F(G(n))=r_1x_1$, which satisfies the above formula. This completes the proof.
\end{proof}

\noindent
For example, $F(G(p^2qr))=F(G(60))=2x_1+x_2+x_3+2x_1x_2+2x_1x_3+x_2x_3+2x_1x_2x_3$, where $p,q,r$ are distinct primes.  Further one may easily verify codes and normal polynomial representations for the following special classes of graphs. 

\begin{enumerate}
\item[(i)] $\sigma(K_1)=(1)$, $F(K_1)=1$, $\sigma(K_n)=(2,2,\ldots,2)$ ($n$ times), $F(K_n)=nx_1$ for $n\geqslant 2$,
\item[(ii)] $\sigma(P_n)=(p_1, p_2, p_1p_3, p_2p_4, \ldots, p_{n-3}p_{n-1}, p_{n-2}p_{n-1})$, 
$F(P_n)=x_1+ x_2 + \displaystyle \sum\limits_{i=1}^{n-3}x_ix_{i+2}+ x_{n-2}x_{n-1}$ for $n\geqslant 3$,
\item[(iii)] $\sigma(C_n)=(p_1p_2, p_1p_3, p_2p_4, \ldots, p_{n-2}p_n, p_{n-1}p_n)$, 
$F(C_n)=x_1x_2 +x_1x_3 +\displaystyle \sum\limits_{i=1}^{n-2}x_ix_{i+2}+x_{n-1}x_n$ for $n\geqslant 4$,
\end{enumerate}

\noindent
where $p_i$ is the $i^\text{th}$ prime for $i=1,2,\ldots,n$ and $K_n$, $P_n$ and $C_n$ are respectively the complete graph, the path and the cycle with $n$ vertices.

\vspace{1em}
\noindent
{\Large \bf Conclusion}

\vspace{1em}
\noindent
There are various representations of simple undirected graphs in terms of adjacency matrices, adjacency lists, unordered pairs etc. But none of them is unique for isomorphic graphs except the one described in the \texttt{nauty} algorithm by McKay \cite{MK}. But this is, in fact, a {\em canonical isomorph} \cite{HR} which is a graph rather than a sequence of integers as the code of a graph introduced here. The importance of the code $\sigma(G)$ is its uniqueness and its simple form. It is same for any set of isomorphic graphs. The determination of $\sigma(G)$ is not always easy, but once it is obtained for a graph it becomes the characteristic of the graph. Authors believe that further study of the code and the normal polynomial representation of a simple undirected graph will be helpful in further research on graph theory. The purpose of this note is to communicate these interesting observations to all graph theorists.

\small

\end{document}